\def\draft{n}
\newtheorem{theorem}{Theorem}[section]
\newtheorem{proposition}{Proposition}[section]
\theoremstyle{definition}
\newtheorem{remark}[proposition]{Remark}
\newtheorem{conjecture}[proposition]{Conjecture}
\def\printname#1{
        \if\draft y
                \smash{\makebox[0pt]{\hspace{-0.5in}
                        \raisebox{8pt}{\tt\tiny #1}}}
        \fi
}
\newcommand{\psdraw}[2]
         {\begin{array}{c} \hspace{-1.3mm}
        \raisebox{-4pt}{\epsfig{figure=draws/#1.eps,width=#2}}
        \hspace{-1.9mm}\end{array}}
\newlength{\standardunitlength}
\long\def\@makecaption#1#2{%
     \vskip 10pt

\setbox\@tempboxa\hbox{
       \small\sf{\bfcaptionfont #1. }\ignorespaces #2}%
     \ifdim \wd\@tempboxa >\captionwidth {%
         \rightskip=\@captionmargin\leftskip=\@captionmargin
         \unhbox\@tempboxa\par}%
       \else
         \hbox to\hsize{\hfil\box\@tempboxa\hfil}%
     \fi}
\font\bfcaptionfont=cmssbx10 scaled \magstephalf
\newdimen\@captionmargin\@captionmargin=2\parindent
\newdimen\captionwidth\captionwidth=\hsize
\def\lbl#1{\label{#1}\printname{#1}}
\def\BN{\mathbbm N}
\def\BZ{\mathbbm Z}
\def\BQ{\mathbbm Q}
\def\BR{\mathbbm R}
\def\BC{\mathbbm C}
\def\bn{\mathbf{n}}
\def\ba{\mathbf{a}}
\def\bb{\mathbf{b}}
\def\BL{\mathbf{L}}
\def\BM{\mathbf{M}}
\def\l{\lambda}
\def\la{\langle}
\def\ra{\rangle}
\def\e{\epsilon}
\def\th{\theta}
\def\longto{\longrightarrow}
\def\SL{\mathrm{SL}}
\def\om{\omega}
\def\fg{\mathfrak{g}}
\def\fsl{\mathfrak{sl}}
\def\Ann{\mathrm{Ann}}
\def\loc{\mathrm{loc}}
\def\GF{\mathrm{GF}}
\def\supp{\operatorname{supp}}
\begin{document}


\title[The $\fsl_3$ Jones polynomial of the trefoil:
a case study of $q$-holonomic sequences]{
The $\fsl_3$ Jones polynomial of the trefoil:
a case study of $q$-holonomic sequences}
\author{Stavros Garoufalidis}
\address{School of Mathematics \\
         Georgia Institute of Technology \\
         Atlanta, GA 30332-0160, USA \newline 
         {\tt \url{http://www.math.gatech.edu/~stavros }}}
\email{stavros@math.gatech.edu}
\author{Christoph Koutschan}
\address{Research Institute for Symbolic Computation \\
         Johannes Kepler University \\
         Altenberger Stra\ss e 69 \\
         A-4040 Linz, Austria \newline
         {\tt \url{http://www.risc.jku.at/home/ckoutsch}}}
\thanks{C.K. was supported by grants DMS-0070567 of the US National Science 
Foundation
and FWF P20162-N18 of the Austrian Science Foundation.
S.G. was supported in part by grant DMS-0805078 of the US National Science 
Foundation. 
\\
\newline
1991 {\em Mathematics Classification.} Primary 57N10. Secondary 57M25.
\newline
{\em Key words and phrases: colored Jones polynomial, knots,
trefoil, torus knots, $\fsl_3$, rank 2 Lie algebras, $q$-holonomic
sequence, recursion ideal, Quantum Topology, $q$-Weyl algebra, Gr\"obner
bases}}

\date{March 1, 2011} 


\begin{abstract}
The $\fsl_3$ colored Jones polynomial of the trefoil knot is a $q$-holonomic
sequence of two variables with natural origin, namely quantum topology.
The paper presents an explicit set of generators for the annihilator ideal of 
this $q$-holonomic sequence as a case study.
On the one hand, our results are new and useful to quantum topology: 
this is the first example of a rank 2 Lie algebra computation 
concerning the colored Jones polynomial of a knot. On the other hand, this
work illustrates the applicability and computational power of the employed 
computer algebra methods.
\end{abstract}

\maketitle

\tableofcontents

\section{The colored Jones polynomial: a case study of $q$-holonomic 
sequences}
\lbl{sec.intro}

\subsection{Introduction}
\lbl{sub.introduction}

The aim of this paper is to investigate the $\fsl_3$ colored Jones polynomial 
of the trefoil knot which is a $q$-holonomic sequence in two variables.
Such sequences arise naturally in quantum topology when studying knots.
The relevance of our results within this field become evident when taking
into account that this is the first example of a rank 2 Lie algebra 
computation concerning the colored Jones polynomial of a knot, in accordance
to the $\fsl_3$ AJ conjecture of \cite{Ga1}.

Using computer algebra, we compute an explicit (conjectured) set of 
generators for this $q$-holonomic sequence as a case study. We employ
the Mathematica packages developed by M. Kauers (see \cite{Ka1,Ka2} 
and the second author (see \cite{Ko2,Ko3,Ko4}). These computations
show the power of the underlying computer algebra methods and may
serve well the theoretical and  practical needs of quantum topology.

Since quantum topology and computer algebra are rather disjoint subjects, 
we need to briefly review some basic concepts before we present our results.

\subsection{$q$-holonomic sequences}
\lbl{sub.qholo}

A $q$-{\em holonomic sequence} $(f_n(q))$ for $ n \in \BN$ 
is a sequence (typically of rational functions $f_n(q) \in \BQ(q)$ in 
one variable $q$)
which satisfies a {\em linear recursion relation}:
$$
a_d(q^n,q) f_{n+d}(q) + \dots + a_0(q^n,q) f_n(q)=0
$$
for all $n \in \BN$, where $a_j(u,v) \in \BQ[u,v]$ for all $j=0,\dots,d$. 
The algorithmic significance of such sequences was first recognized 
by Zeilberger in \cite{Z}, where
also a generalization to multivariate sequences $f_{{\bf n}}(q)$ for 
${\bf n}=(n_1,\dots, n_r) \in \BN^r$ was given. A down-to-earth introduction
of ($q$-) holonomic sequences and closely related notions
is given in~\cite{Ko1}. It is well known that 
$q$-holonomic sequences enjoy several useful properties:

\begin{itemize}
\item[(a)] Addition, multiplication, and specialization
preserve $q$-holonomicity.
\item[(b)] Finite (multi-dimensional) sums of proper $q$-hypergeometric terms
are $q$-holonomic in the remaining free variables. This is the 
{\em fundamental theorem} of WZ theory; see \cite{WZ}.
\item[(c)] The fundamental theorem is constructive, and computer-implemented;
see \cite{PR1,PR2,Ko3,PWZ}. 
\end{itemize}

\subsection{The colored Jones polynomial is $q$-holonomic}
\lbl{sub.cj}

Five years ago, a natural source of $q$-holonomic sequences was discovered:
{\em Knot Theory and Quantum Topology}; see \cite{GL}. Let us review some
basic concepts of knot theory and quantum topology here. For a detailed 
discussion, the reader may look into \cite{B-N,GL,Ja,Jo,Tu1,Tu2}.
A {\em knot} $K$ in 3-space is
a smoothly embedded circle in the 3-sphere $S^3$ considered up to isotopy.
The simplest non-trivial knot is the {\em trefoil}:
$$
\psdraw{trefoil}{1in}
$$
In his seminal paper \cite{Jo}, Jones introduced the {\em Jones polynomial}
of a knot $K$. The Jones polynomial of a knot~$K$ is an element
of $\BZ[q^{\pm 1}]=\BZ[q,q^{-1}]$ and can be 
generalized to a polynomial invariant $J_{K,V}(q) \in \BZ[q^{\pm 1}]$ 
of a framed knot $K$ colored by a representation $V$ of a simple Lie 
algebra $\mathfrak{g}$. When the Lie algebra is $\fsl_2$ and $V=\BC^2$
is the 2-dimensional irreducible representation, $J_{K,\BC^2}(q)$ is
the classical Jones polynomial. The definition of $J_{K,V}(q)$ uses the 
machinery of {\em quantum groups} and may be found in \cite{Ja,Tu1,Tu2}.
Without going into details, let us point out some features of this theory. 
Irreducible representations of a simple Lie algebra $\fg$ are parametrized by
dominant weights, and once we choose a basis of fundamental weights, the set 
of dominant weights can be identified with $\BN^r$, where $r$ is the rank 
of the Lie algebra. Thus, the $\fg$ colored Jones polynomial
of a knot is a function ${\bf n} \in \BN^r \mapsto J_{K,{\bf n}}(q) \in 
\BZ[q^{\pm 1}]$. In \cite{GL} the following result was shown.

\begin{theorem}
\lbl{thm.GL}\cite[Thm.1]{GL}
For every simple Lie algebra $\fg$ of rank $r$, and every knot $K$, the 
function $J_{K,\bullet}(q): \BN^r \to \BZ[q^{\pm 1}]$ is $q$-holonomic. 
\end{theorem}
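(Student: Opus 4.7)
The plan is to reduce the statement to the fundamental theorem of WZ theory (item (b) of Section~\ref{sub.qholo}) by exhibiting $J_{K,\bn}(q)$ as a multidimensional sum of a proper $q$-hypergeometric summand in the color parameters $\bn\in\BN^r$ and the summation indices. First I would fix a presentation of the knot $K$ suitable for the Reshetikhin--Turaev functor: e.g.\ write $K$ as the plat (or trace) closure of a tangle $T$, decomposed into elementary Morse pieces (cups, caps, positive and negative crossings) stacked vertically. The Reshetikhin--Turaev construction then expresses $J_{K,\bn}(q)$ as the $(1,1)$-entry of an operator on $V_\bn$ obtained as a composition of morphisms associated to each elementary piece, and this entry can be written as a trace over a tensor product of weight spaces of $V_\bn$.

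Next I would insert an explicit weight basis of $V_\bn$ at every internal horizontal level of the diagram. With respect to such a basis, the matrix entries of the three elementary morphisms are known in closed form: the cup/cap morphisms contribute ratios of quantum factorials (quantum dimensions and quantum binomial coefficients), while the $R$-matrix entries are known, by work of Rosso, Kirillov--Reshetikhin, and others, to be finite $q$-hypergeometric sums involving quantum binomial coefficients and powers of $q$ with quadratic exponents in the weight labels and $\bn$. Consequently, after summing out the internal indices that are forced by weight conservation, $J_{K,\bn}(q)$ becomes a finite multisum
\begin{equation*}
J_{K,\bn}(q) \;=\; \sum_{\bk \in S(\bn)} F(\bn,\bk;q)
\end{equation*}
where the summation range $S(\bn)\subset \BN^N$ is cut out by linear inequalities in $\bn,\bk$, and $F(\bn,\bk;q)$ is a ratio of quantum factorials of $\BZ$-linear forms in $\bn,\bk$ times a power of $q$ whose exponent is a quadratic form in $\bn,\bk$. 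This is precisely the definition of a \emph{proper $q$-hypergeometric term}.

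At this point the fundamental theorem of WZ theory, recalled as property~(b) above, applies verbatim: any finite multidimensional sum of a proper $q$-hypergeometric term is $q$-holonomic in the remaining free parameters, here $\bn$. This yields the desired conclusion that $J_{K,\bullet}(q)$ is a $q$-holonomic function on $\BN^r$.

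The main obstacle is the second step: producing, uniformly in the simple Lie algebra $\fg$ of rank $r$, an explicit proper $q$-hypergeometric formula for the matrix entries of the elementary morphisms in a weight basis. For $\fsl_2$ this is classical and straightforward; for higher rank one must either invoke general formulas for $R$-matrix entries in terms of quantum multinomials (of linear forms in the weights with coefficients determined by the Cartan data of $\fg$), or argue more abstractly that the $R$-matrix, being the universal $R$-matrix of $U_q(\fg)$ acting on a finite-dimensional representation, has entries in $\BZ[q^{\pm 1/2}]$ that are polynomial of bounded degree in $q^{\pm n_i}$ with coefficients that themselves lie in a proper $q$-hypergeometric class. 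Once this uniform $q$-hypergeometric shape of the local building blocks is verified, everything else is a routine application of~(b).
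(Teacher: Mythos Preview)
Your approach is essentially the same as the paper's: present $K$ via a diagram, express $J_{K,\bn}(q)$ as a state sum whose summand is a product of $R$-matrix entries and framing/cup-cap contributions, verify that this summand is proper $q$-hypergeometric, and invoke property~(b). The paper's sketch differs only in emphasis: rather than a weight basis of $V_\bn$, it stresses that one must choose a suitable basis of the \emph{quantum group} itself (a PBW-type basis), and that the technical heart of \cite{GL} is precisely the verification that in this basis the $R$-matrix entries are $q$-proper hypergeometric multisums for arbitrary simple $\fg$---exactly the ``main obstacle'' you flag at the end.
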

It will be useful to recall in brief the proof of the above 
theorem using principles of $q$-holonomic sequences and 
quantum group theory. The quantum group invariant $J_{K,V}(q)$ 
of a knot $K$ is a local object, obtained by 
\begin{itemize}
\item
choosing a planar projection of the knot, 
\item
assigning a tensor (the so-called $R$-matrix or its inverse, which depends
on the representation $V$ of the simple Lie algebra $\fg$)
to each positive or negative crossing,  
\item
summing over all contractions of indices, and adjust for the framing. 
\end{itemize}
It follows that $J_{K,V}(q)$ is a finite multi-dimensional sum where the 
summand is a product of entries of the $R$-matrix with certain framing 
factors. In \cite{GL} it was shown that
if we choose a basis of the quantum group suitably, the entries of the 
$R$-matrix are $q$-proper hypergeometric multisums. In addition, the 
framing factor is a monomial in $q$ raised to a quadratic 
form in the summation variables. It follows from property (b) of Section
\ref{sub.qholo} that $J_{K,V}(q)$ is $q$-holonomic.
This summarizes the proof of Theorem \ref{thm.GL}.

The proof of Theorem \ref{thm.GL} is algorithmic. When the Lie algebra is 
$\fsl_2$ and $K$ is a knot with a planar projection with $d$ crossings, 
there is an explicit proper $q$-hypergeometric term in $d+1$ variables
$(k_1,\dots,k_d,n)$ such that summation in all $(k_1,\dots,k_d) \in \BN^d$
(this is a finite sum) gives $J_{K,n}(q)$. The algorithm is discussed in detail
in \cite[Sec.3]{GL} and has been computer-implemented in the {\tt KnotAtlas}
via the command {\tt ColouredJones}; see \cite{B-N}. 
The above presentation shows the limitation of the WZ method. Although 
$J_{K,n}(q)$ is a $q$-holonomic sequence presented by an explicit 
$q$-hypergeometric multisum, when $K$ is a knot with several (eg. 8) 
crossings, the computation of a recursion relation for $J_{K,n}(q)$
via WZ theory of \cite{WZ} becomes impractical.

\subsection{The $\fsl_3$-colored Jones polynomial of the trefoil}
\lbl{sub.cjtrefoil}

Concrete formulas for the colored Jones polynomial $J_{K,V}(q)$ are hard to 
find in the case of higher rank Lie algebras, and for good reasons. 
For {\em torus knots}~$T$, Jones and Rosso gave a formula for $J_{T,V}(q)$ which 
involves a {\em plethysm map} of $V$, unknown in general; see \cite{JR}.
The plethysm map was computed explicitly by \cite{GV1} for $\fsl_3$
(and in forthcoming work of \cite{GV2} for all simple Lie algebras). 
This gives an explicit formula for the trefoil, and more generally for the
$T(2,b)$ torus knots, where $b$ is an odd natural number. Let us recall this
formula, which is the starting point of our paper, using the notation
of \cite{GV1}. Let 
\begin{equation}
\lbl{eq.fm1m2}
f_{b,n_1,n_2}(q)=J_{T(2,b),n_1,n_2}(q)
\end{equation}
denote the $\fsl_3$ quantum group invariant of the torus knot $T(2,b)$
colored with the irreducible representation $V_{n_1,n_2}$ of $\fsl_3$
of highest weight $\l=n_1 \om_1 + n_2 \om_2$ ($0$-framed and normalized to be 
$1$ at the unknot), where $n_1, n_2$ are non-negative natural numbers and
$\om_1,\om_2$ are the fundamental weights of $\fsl_3$; \cite{FH,Hu}.

\begin{theorem}
\lbl{thm.GV}\cite{GV1}
For all odd natural numbers $b$ we have
\begin{eqnarray*}
f_{b,n_1,n_2}(q) &=& \frac{\theta_{n_1,n_2}^{-2b}}{d_{n_1,n_2}}
\left(
\sum_{l=0}^{\min\{n_1,n_2\}}\sum_{k=0}^{n_1-l}(-1)^k d_{2n_1-2k-2l,2n_2+k-2l}
\theta_{2n_1-2k-2l,2n_2+k-2l}^{\frac{b}{2}} \right. \\ 
&   & \qquad\qquad 
+  \sum_{l=0}^{min\{n_1,n_2\}}\sum_{k=0}^{n_2-l}(-1)^k d_{2n_1+k-2l,2n_2-2k-2l}
\theta_{2n_1+k-2l,2n_2-2k-2l}^{\frac{b}{2}} \\
&   & \qquad\qquad - \left. \sum_{l=0}^{min\{n_1,n_2\}}  d_{2n_1-2l,2n_2-2l}
\theta_{2n_1-2l,2n_2-2l}^{\frac{b}{2}} \right)
\end{eqnarray*}
where the {\em quantum integer} $[n]$, 
the {\em quantum dimension} $d_{n_1,n_2}$ and the 
{\em twist parameter} $\th_{n_1,n_2}$ of $V_{n_1,n_2}$ are defined by
\begin{eqnarray}
\lbl{eq.n}
[n]&=&\frac{q^{\frac{n}{2}}-q^{-\frac{n}{2}}}{q^{\frac{1}{2}}-q^{-\frac{1}{2}}}
\\
\lbl{eq.dim}
d_{n_1,n_2}&=&\frac{[n_1+1][n_2+1][n_1+n_2+2]}{[2]}
\\
\lbl{eq.theta}
\theta_{n_1,n_2}&=&q^{\frac{1}{3}(n_1^2+n_1n_2+n_2^2)+n_1+n_2}
\end{eqnarray}
\end{theorem}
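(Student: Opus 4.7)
\emph{Proof plan.} The natural starting point is the Rosso--Jones formula \cite{JR} specialized to two--bridge torus knots $T(2,b)$. For any simple Lie algebra $\fg$, this formula expresses
$$
J_{T(2,b),V}(q) \;=\; \frac{\theta_V^{-2b}}{d_V}\sum_{W} \epsilon_W\, d_W\, \theta_W^{b/2},
$$
where the sum runs over the irreducible constituents $W$ of the plethysm $V^{\otimes 2}=\mathrm{Sym}^2 V \oplus \wedge^2 V$, and $\epsilon_W=+1$ (resp.\ $-1$) if $W$ appears in $\mathrm{Sym}^2 V$ (resp.\ $\wedge^2 V$). The overall prefactor $\theta_V^{-2b}/d_V$ in the claimed formula matches this shape exactly, with $V=V_{n_1,n_2}$, so the whole theorem reduces to identifying the set of summands with the correct signs and multiplicities.

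The main input is then the explicit $\fsl_3$ plethysm decomposition of $V_{n_1,n_2}\otimes V_{n_1,n_2}$ established in \cite{GV1}. One would combine Clebsch--Gordan with the involution that swaps the two tensor factors: the tensor product $V_{n_1,n_2}\otimes V_{n_1,n_2}$ decomposes as $\bigoplus_{l,k} V_{\mu(l,k)}$ with explicit weights, and the symmetric/antisymmetric character of each constituent can be read off from the parity of a single parameter. Writing this out for $\fsl_3$ produces three natural families of irreducibles: one indexed by $(l,k)$ with $0\le l\le\min\{n_1,n_2\},\,0\le k\le n_1-l$, one symmetric to it under the outer automorphism of $\fsl_3$ with $0\le k\le n_2-l$, and a ``diagonal'' family at $k=0$ that lies in both. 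Inserting the parities $\epsilon_W=(-1)^k$ into the Rosso--Jones sum immediately produces the two double sums of the statement, with signs $(-1)^k$, and highest weights $(2n_1-2k-2l,\,2n_2+k-2l)$ resp.\ $(2n_1+k-2l,\,2n_2-2k-2l)$.

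The last term in the formula, $-\sum_l d_{2n_1-2l,2n_2-2l}\,\theta_{2n_1-2l,2n_2-2l}^{b/2}$, is the inclusion--exclusion correction: the $k=0$ slices of the two double sums both include the diagonal family $V_{2n_1-2l,2n_2-2l}$, so this contribution has been counted twice with sign $+1$ and must be subtracted once. Collecting the three pieces and pulling out the common factor $\theta_{n_1,n_2}^{-2b}/d_{n_1,n_2}$ then yields the stated identity.

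The main obstacle is the middle step: giving the precise $\fsl_3$ plethysm decomposition together with the sign of each constituent. Although the total tensor product decomposition is standard, sorting the constituents by their symmetric/antisymmetric type (and thereby producing the $(-1)^k$ signs and the correct index ranges) is delicate and is exactly the technical content of \cite{GV1}. Once that decomposition is in hand, the remainder of the proof is essentially bookkeeping: matching index ranges, verifying the overlap at $k=0$, and confirming the overall normalization by evaluation at the unknot where the formula must collapse to $1$.
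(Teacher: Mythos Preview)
The paper does not itself prove this theorem; it is quoted from \cite{GV1} as the starting point for the computations, and the surrounding text in Section~\ref{sub.cjtrefoil} explains that the formula comes from specializing the Rosso--Jones formula \cite{JR} once the $\fsl_3$ plethysm has been computed explicitly in \cite{GV1}. Your outline is exactly this: Rosso--Jones for $T(2,b)$ reduces the problem to the decomposition of $\mathrm{Sym}^2 V_{n_1,n_2}\ominus\wedge^2 V_{n_1,n_2}$, and the explicit $\fsl_3$ plethysm of \cite{GV1} supplies the constituents, their parities $(-1)^k$, and the index ranges, with the final single sum correcting the overlap at $k=0$. So your plan matches the approach the paper attributes to \cite{GV1}, and you have correctly identified that the only substantive step is the plethysm computation itself.
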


\subsection{The $q$-Weyl algebra}
\lbl{sub.qweyl}

Our results below are phrased in terms of the $q$-Weyl algebra, and its
corresponding module theory and geometry. Let us recall the appropriate 
$q$-Weyl algebra for bivariate $q$-holonomic sequences.

Consider the operators $M_i, L_i$
for $i=1,2$ which act on a sequence $f_{n_1,n_2}(q) \in \BQ(q)$ by:
\begin{align*}
(L_1 f)_{n_1,n_2}(q)& = f_{n_1+1,n_2}(q) & (M_1 f)_{n_1,n_2}(q)& = q^{n_1} f_{n_1,n_2}(q)
\\ 
(L_2 f)_{n_1,n_2}(q)& = f_{n_1,n_2+1}(q) & (M_2 f)_{n_1,n_2}(q)& = q^{n_2} f_{n_1,n_2}(q)
\end{align*}
It is easy to see that the operators $q,M_1,L_1,M_2,L_2$ commute except in
the following cases:
\begin{align*}
 L_1 M_1 & = q M_1 L_1 & L_2 M_2 & = q M_2 L_2
\end{align*}
It follows that if $R(q,M_1,M_2) \in \BQ(q,M_1,M_2)$ is a rational function,
then we have the commutation relations
\[
\lbl{eq.I}
  I=\{L_1 R(q,M_1,M_2)-R(q,q M_1,M_2)L_1, L_2 R(q,M_1,M_2)-R(q,M_1,q M_2)L_2\}.
\]
Consider the (localized) $q$-{\em Weyl algebra} 
\begin{equation}
\lbl{eq.qweyl}
W_{q,\loc}= \BQ(q,M_1,M_2)\la L_1, L_2 \ra/(I) 
\end{equation}
generated by the operators $L_1,L_2$ over the field $\BQ(q,M_1,M_2)$,
modulo the 2-sided ideal $I$ above. $W_{q,\loc}$ is an example of an
{\em Ore extension} of the field $\BQ(q,M_1,M_2)$; see \cite{Or}.
Given a bivariate
sequence $f_{n_1,n_2}(q) \in \BQ(q)$, consider the annihilating ideal
$$
\Ann_f=\{P \in W_q \, | \, Pf=0 \}.
$$
$\Ann_f$ is always a left ideal in $W_q$, which gives rise to the cyclic
$W_{q,\loc}$-module $M_f=W_{q,\loc}/\Ann_f$. Finitely generated $W_{q,\loc}$-modules 
$M$ have a 
well-defined theory of {\em dimension} (and {\em Hilbert polynomial}), 
and satisfy the key {\em Bernstein inequality}; see \cite[Thm.2.1.1]{Sa}. 
Thus, one can
define $q$-holonomic modules. When $M=W_{q,\loc}/J$ for a left ideal $J$, the 
Hilbert polynomial of the left ideal $J$ can be computed by 
{\em Gr\"obner bases}~\cite{Bu} and their {\em Gr\"obner fans}
which are well-defined for the case of the Ore
algebra $W_{q,\loc}$ under consideration; see \cite{Sa,Ko1}. For a computer 
implementation of Gr\"obner bases in the Ore algebra $W_{q,\loc}$, see 
\cite{CS,Ko1}.

\subsection{Our results}
\lbl{sub.results}

One may try to compute generators for the ideal of recursion
relations using techniques which are well known in computer
algebra, but which are recalled in Section~\ref{sec.thm1} for sake of 
self-containedness. Given elements $P_i$ for $i \in S$
of $W_{q,\loc}$, let $\la P_i | i \in S\ra$ denote the left ideal of $W_{q,\loc}$
that they generate. 
Furthermore, let $\tau$ denote the symmetry map that exchanges $n_1$ and~$n_2$,
i.e., for $P\in W_{q,\loc}$ we have $\tau(P(M_1,M_2,L_1,L_2))=P(M_2,M_1,L_2,L_1)$.

\begin{theorem}
\lbl{thm.1}
\rm{(a)}
Let $P_1,P_2,P_3$ be the operators of Appendix \ref{sec.app1}
and 
\begin{equation}
\lbl{eq.IP}
J=\la P_1, P_2, P_3 \ra.
\end{equation}
Then $\{P_1,P_2,P_3\}$ is a Gr\"obner basis for $J$ with respect to
total degree lexicographic order ($L_1>L_2$), 
and $J$ is a zero-dimensional ideal of rank~$5$. 
\newline
\rm{(b)}
We have
$$
J=\la Q_1, Q_2 \ra
$$
where $\{Q_1,Q_2\}$ is a Gr\"obner basis with respect to the lexicographic 
order ($L_1>L_2$). The five monomials below the staircases of 
$\la P_1, P_2, P_3 \ra$ and $\la Q_1, Q_2 \ra$ are given by:
$$
\psdraw{staircase}{1.5in}
$$
\newline
\rm{(c)}
The Gr\"obner fan $\GF(J)$ of $J$ is a fan in $\BR^2_+$ 
with rays generated by the vectors $(4,1),(2,1),(1,1),(1,2),(1,4)$
on $(L_1,L_2)$ coordinates.
$$
\psdraw{gfan}{1in}
$$
\newline
\rm{(d)}
The left ideal~$J$ is invariant under the symmetry map~$\tau$.
The operator~$P_1$ is itself symmetric (modulo sign change)
\end{theorem}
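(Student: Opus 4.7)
The plan is to verify all four parts by direct computations in the Ore algebra $W_{q,\loc}$, reducing every assertion to the statement that certain S-polynomials vanish modulo a generating set under the non-commutative Buchberger algorithm, which applies to $W_{q,\loc}$ since the relations $L_iM_i=qM_iL_i$ give a two-sided normal form on monomials $M_1^{a_1}M_2^{a_2}L_1^{b_1}L_2^{b_2}$. For part~(a), I would form the three S-polynomials $S(P_i,P_j)$ with $1\le i<j\le 3$ with respect to the total-degree lexicographic order with $L_1>L_2$, apply left division modulo $\{P_1,P_2,P_3\}$, and confirm each remainder is zero, thereby invoking Buchberger's criterion. The five standard monomials below the staircase of leading terms can then be listed directly and are the ones shown; finiteness of this set gives zero-dimensionality and its count gives $\mathrm{rank}(J)=\dim_{\BQ(q,M_1,M_2)} W_{q,\loc}/J = 5$. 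For part~(b) I would run the same Buchberger check on $\{Q_1,Q_2\}$ under the pure lex order, and then identify the two ideals $\la P_1,P_2,P_3\ra$ and $\la Q_1,Q_2\ra$ by reducing each $Q_j$ modulo $\{P_1,P_2,P_3\}$ and each $P_i$ modulo $\{Q_1,Q_2\}$, obtaining zero in both directions.

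For part~(c), computing the Gr\"obner fan amounts to enumerating all distinct initial ideals $\mathrm{in}_w(J)$ as the weight vector $w=(w_1,w_2)\in\BR^2_+$ varies, where $w_i$ is the weight assigned to $L_i$. I would carry out a Gr\"obner walk starting from the basis of part~(a), crossing each of the five candidate walls, and verify via Buchberger that each of the resulting marked reduced Gr\"obner bases indeed generates~$J$. The symmetric ray list $(4,1),(2,1),(1,1),(1,2),(1,4)$ is consistent with, and partly explained by, the involution of part~(d). For part~(d), since $\tau$ swaps $L_1\leftrightarrow L_2$ and $M_1\leftrightarrow M_2$ while fixing $q$, a direct check gives $\tau(L_1M_1)=L_2M_2=qM_2L_2=\tau(qM_1L_1)$ and similarly for the other relation, so $\tau$ extends to a $\BQ(q)$-algebra involution of $W_{q,\loc}$. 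Invariance of~$J$ then reduces to showing $\tau(P_i)\in J$ for $i=1,2,3$: symmetry of~$P_1$ up to sign is verified by inspection of the explicit coefficients from Appendix~\ref{sec.app1}, while $\tau(P_2)$ and $\tau(P_3)$ are reduced against the Gr\"obner basis of part~(a) and shown to have zero remainder.

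The main obstacle throughout is the sheer size of the intermediate expressions rather than any conceptual hurdle. Each $P_i$ has many terms whose coefficients are rational functions in $q,M_1,M_2$ of substantial degree in each variable, and non-commutative S-polynomial reductions compound these sizes quickly through repeated multiplication and simplification in $\BQ(q,M_1,M_2)$. The proof therefore depends essentially on the Mathematica packages of Kauers \cite{Ka1,Ka2} and of the second author \cite{Ko2,Ko3,Ko4}, which implement Buchberger's algorithm and the Gr\"obner walk for Ore algebras with rational-function coefficients and make these verifications tractable.
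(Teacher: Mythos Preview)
Your proposal is correct and follows essentially the same approach as the paper: the paper's own proof consists of a single sentence referring the reader to the electronic supplementary material~\cite{Ko4}, where the verifications are carried out in Mathematica using the packages \texttt{Guess.m} and \texttt{HolonomicFunctions.m}. You have simply unpacked what those computer checks amount to (Buchberger's criterion on S-polynomials, cross-reduction of the two generating sets, enumeration of initial ideals, and direct reduction of $\tau(P_i)$), and correctly identified that the only obstacle is expression swell rather than any conceptual step; the paper's Section~\ref{sub.gb} mentions FGLM rather than the Gr\"obner walk for the basis conversions, but this is a minor implementation difference within the same overall strategy.
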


\begin{proof}
The computer proof of statements (a)--(d) is given in the electronic 
supplementary material~\cite{Ko4}.
\end{proof}

\begin{conjecture}
Let~$J$ be the left ideal defined in~\eqref{eq.IP}.
Then 
$$
\Ann_{f_{3,n_1,n_2}}=J.
$$
\end{conjecture}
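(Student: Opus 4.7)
The plan is to establish the two inclusions $J \subseteq \Ann_{f_{3,n_1,n_2}}$ and $\Ann_{f_{3,n_1,n_2}} \subseteq J$ separately. Neither is granted by Theorem~\ref{thm.1}, which certifies the algebraic structure of $J$ but says nothing about the specific sequence $f_{3,n_1,n_2}$.

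To show $J \subseteq \Ann_{f_{3,n_1,n_2}}$, I would apply each generator $P_i$ ($i=1,2,3$) to the explicit triple-sum formula of Theorem~\ref{thm.GV} specialized to $b=3$. Since the summand is $q$-proper hypergeometric in the summation variables $k,l$ and the parameters $n_1,n_2$, creative telescoping via the packages of~\cite{Ko2,Ko3} produces a Gr\"obner basis of recurrences annihilating the triple sum, and it then suffices to verify that each $P_i$ lies in the left ideal generated by those certified recurrences---a routine left-ideal membership test in $W_{q,\loc}$ using the Gr\"obner basis machinery already employed in the proof of Theorem~\ref{thm.1}.

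For the reverse inclusion, the key observation is that $J$ is zero-dimensional of holonomic rank exactly $5$ by Theorem~\ref{thm.1}(a), so $W_{q,\loc}/J$ is a $5$-dimensional vector space over $\BQ(q,M_1,M_2)$ with the five monomials displayed in Theorem~\ref{thm.1}(b) as a basis. Any element of $\Ann_{f_{3,n_1,n_2}} \setminus J$ descends to a nontrivial $\BQ(q,M_1,M_2)$-linear relation $\sum_\alpha c_\alpha(q,M_1,M_2) L_1^{a_\alpha}L_2^{b_\alpha}\cdot f_{3,n_1,n_2} = 0$ among the five corresponding translates, so the task reduces to proving that these five translates are $\BQ(q,M_1,M_2)$-linearly independent as bivariate sequences in $(n_1,n_2)$.

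A concrete route to this independence is interpolation: bound \emph{a priori} the $q$-, $M_1$-, and $M_2$-degrees of the coefficients $c_\alpha(q,M_1,M_2)$ of any putative minimal annihilator (using the degrees of the $P_i$, the shape of the staircase from Theorem~\ref{thm.1}(b), and the structure of the Gr\"obner fan in Theorem~\ref{thm.1}(c)), thereby reducing the question to a finite linear algebra problem whose unknowns are the finitely many coefficients of the $c_\alpha$; evaluating $f_{3,n_1,n_2}$ at sufficiently many points via Theorem~\ref{thm.GV} then yields an overdetermined system, and triviality of its nullspace would complete the proof. The main obstacle is exactly the derivation of such rigorous \emph{a priori} degree bounds: without them the interpolation argument only establishes the conjecture modulo a fixed ansatz size. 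An alternative route is to run a full creative-telescoping computation directly on the multi-sum of Theorem~\ref{thm.GV} and read off generators of $\Ann_{f_{3,n_1,n_2}}$, comparing them to $J$ by a final Gr\"obner basis equality check; the bottleneck here is the size of the intermediate telescoping certificates, which likely explains why the authors state the equality as a conjecture rather than a theorem.
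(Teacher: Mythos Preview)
The statement is labeled a \emph{conjecture} in the paper; the authors give no proof. They do not even establish the containment $J\subseteq\Ann_{f_{3,n_1,n_2}}$: the generators $P_1,P_2,P_3$ were obtained by guessing (Section~\ref{sub.guess}) from finitely many modular evaluations, with no certificate that they annihilate the sequence. So there is no paper proof to compare against, and a rigorous argument for either inclusion would already go beyond the paper.

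Your plan for $J\subseteq\Ann$ via creative telescoping on the sum of Theorem~\ref{thm.GV} followed by a left-ideal membership test is sound in principle and, if the telescoping terminates in practice, would settle that inclusion. Your reduction of the reverse inclusion to $\BQ(q,M_1,M_2)$-linear independence of the five translates $\{f,\,L_1f,\,L_2f,\,L_2^2f,\,L_1L_2f\}$ is also correct. The genuine gap lies in both of your proposed routes to that independence. For the interpolation route, you suggest bounding the $(q,M_1,M_2)$-degrees of a hypothetical extra annihilator ``using the degrees of the $P_i$, the shape of the staircase, and the Gr\"obner fan of $J$''; but these data describe $J$, not $\Ann$, and nothing about $J$ constrains the coefficients of an operator lying outside it. Without an independent degree bound the finite linear-algebra check proves nothing beyond a fixed ansatz size, which is precisely the heuristic the authors already carried out. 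For the creative-telescoping route, you write that one can ``read off generators of $\Ann_{f_{3,n_1,n_2}}$'', but creative telescoping only produces certified \emph{elements} of $\Ann$, not a generating set for it; even a successful run yields a subideal and so again addresses only the easy inclusion. The obstacles you name are the real ones, and they are why the statement remains a conjecture.
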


\begin{remark}
\lbl{rem.symmetry}
The colored Jones polynomial satisfies the symmetry $J_{K,V^*}(q)=J_{K,V}(q)$
where $V^*$ is the dual representation of the simple Lie algebra. 
Since $V_{n_1,n_2}^*=V_{n_2,n_1}$ for the representations of $\fsl_3$, it follows 
that $f_{b,n_1,n_2}(q)=f_{b,n_2,n_1}(q)$.
Thus, the fact that $J$ is invariant under $\tau$ gives additional
evidence for the above conjecture.
\end{remark}


\subsection{The $\fsl_3$ AJ Conjecture for the trefoil}
\lbl{sub.AJ}

One of the best-known geometric invariants of holonomic $D$-modules are
their characteristic varieties, \cite{Ks}. In the case of holonomic $D$-modules
over the complex torus $(\BC^*)^r$, the characteristic variety is a Lagrangian
subvariety of the cotangent bundle $T^*((\BC^*)^r)$, namely a conormal bundle.
$q$-holonomic $D$-modules have a characteristic variety, too. In the case
of the $q$-holonomic $D$-modules that come from knot theory, namely 
the $\fg$ colored Jones polynomial of a knot $K$, their characteristic
variety is a subvariety of $\BC^{2r}$, where $r$ is the rank of the simple
Lie algebra $\fg$. In \cite{Ga1} the first author formulated the AJ
Conjecture which identifies the characteristic variety of the colored
Jones polynomial of a knot with its deformation variety. The latter is
the variety of $G$-representations of the boundary torus of the knot 
complement that extend to $G$-representations of the knot complement,
where $G$ is the simple, simply connected Lie group with Lie algebra $\fg$.

In down-to-earth terms, the $\fsl_3$ AJ Conjecture involves the image of the
$\fsl_3$ recursion ideal of a knot under the partially defined map:
\begin{equation}
\lbl{eq.eWq}
\e: W_{q,\loc} \longto \BQ(M_1,M_2)[L_1,L_2], \qquad e(P)=P|_{q=1}
\end{equation}
For the ideal $J$ of Proposition~\ref{thm.1}, we have
\begin{equation}
\lbl{eq.eJ}
\e(J)=\la p_1,p_2,p_3 \ra
\end{equation}
where $p_1,p_2,p_3$ are given in the appendix.
In general, it is hard to compute the variety of $\SL_3(\BC)$ representations
of a knot complement and to compare it with the above image. For the trefoil
knot, however such a computation is possible (see \cite{Ga2}). It turns out
that $\e(J)$ coincides with the $\SL_3(\BC)$-deformation variety of the trefoil,
thus confirming the AJ conjecture.

\section{The computation}
\lbl{sec.thm1}

\subsection{Guessing generators for the annihilator ideal}
\lbl{sub.guess}

The method of guessing is nothing else but an ansatz
with undetermined coefficients. Assume that a multivariate sequence
$(f_{\bn}(q)),\bn\in\BN^r$, of rational functions in~$q$, i.e.,
$f_{\bn}(q)\in\BQ(q)$, is given by some expression that
allows to compute the values of $(f_{\bn}(q))$ for small~$\bn$, e.g.,
for all~$\bn$ inside the hypercube~$[0,n_0]^r$.  For finding linear
recurrences with polynomial coefficients (they are guaranteed to exist
if $(f_{\bn}(q))$ is known to be $q$-holonomic), an ansatz of the
following form is used:
\begin{equation}\label{ansatz}
  \bigg(\sum_{(\ba,\bb)\in S\subseteq\BN^{2r}} c_{\ba,\bb}\BM^{\ba}\BL^{\bb}\bigg) f_{\bn}(q)=0
\end{equation}
where the {\em structure set}~$S$ is finite and the unknowns~$c_{\ba,\bb}$ are to
be determined in~$\BQ(q)$. The multi-index notation 
$\BM^{\ba}=M_1^{a_1}\cdots M_r^{a_r}$ is employed here.
Linear equations for the $c_{\ba,\bb}$ are
obtained by substituting concrete integer tuples for~$\bn$
into~\eqref{ansatz} and by plugging in the known values of
$(f_{\bn}(q))$. If sufficiently many equations are generated this way,
i.e., if the resulting system of equations is overdetermined, then it
is to be expected with high probability that the solutions of this
system are indeed valid recurrences for the sequence in question.

In the case of a univariate sequence $(f_n(q))$, usually the recursion 
relation with minimal order is desired. The analogue in the multivariate case
is a set of recurrence operators that generate a left ideal in $W_{q,\loc}$
of minimal rank. For their many nice properties it is natural
to aim at a {\em Gr\"obner basis}~\cite{Bu} of this annihilating ideal. There 
are two strategies how to obtain such a basis of recurrences. The first
strategy consists in finding a bunch of recurrences by guessing and
then apply Buchberger's algorithm to obtain a Gr\"obner basis of the
ideal they generate. The more recurrences are used as input, the higher
is the probability of ending up with the full annihilating ideal (the
one with minimal rank). Since this latter computation can be quite 
tedious, we follow a different strategy that doesn't require Buchberger's
algorithm: the structure set~$S$ in the ansatz~\eqref{ansatz}
is adjusted by trial and error until the linear system for the $c_{\ba,\bb}$
admits a single solution, which (supposedly)
is an element of the desired Gr\"obner basis. The latter is achieved
by searching systematically (in the same fashion as in the
FGLM algorithm) through the monomials $\BL^{\bb}$ that appear in the
ansatz. This second strategy will finally deliver a set of operators that 
are very likely to be elements of the annihilator of $(f_{\bn}(q))$
(as it is the case for the first strategy), and that are very likely
to form a Gr\"obner basis (which is not the case for the first strategy).

The above described method involves a lot of trials until the
structure set for each element of the Gr\"obner basis has been
figured out exactly. In order to make this reasonably fast, modular
techniques are used.  First, the input data $(f_{\bn}(q))$ for
$\bn\in[0,n_0]^r$ is computed only for a specific choice of~$q$, say
$q=64$, and modulo some prime number, say $p=2147483647$. 
While in other examples of this type it doesn't really matter which value 
is substituted for~$q$ (there may be a small finite set of non-eligible
integers only), the situation here is slightly different and the choice
$q=64$ not at random: note the rational numbers $\frac12$ and $\frac13$
that appear in the exponents of Theorem~\ref{thm.GV}. Although in the
end all fractional powers disappear, yielding a Laurent polynomial,
the do appear in intermediate expressions. Thus setting $q=m^6$ for some 
$m\in\BN$ simplifies our job significantly. All
following computations (constructing and solving the linear system) are
now done with the homomorphic images in $\BZ_p$ instead of
$\BQ(q)$. The computational complexity is reduced drastically since no
expression swell can happen. Once the exact shape of~\eqref{ansatz} is
found this way, the final computation over $\BQ(q)$ can be started
with a refined ansatz by omitting all unknowns $c_{\ba,\bb}$ that
become zero in the modular computation. This methodology of guessing
as described above is implemented in the Mathematica package 
\texttt{Guess.m} (see~\cite{Ka1,Ka2}).

In our case study of the $\fsl_3$ colored Jones polynomial of the trefoil knot 
we have $r=2$ and Theorem~\ref{thm.GV} allows to compute 
$f_{3,n_1,n_2}(q)$ for sufficiently many $(n_1,n_2)\in\BN^2$.
Trial and error gave evidence that the $(L_1,L_2)$-supports
of the Gr\"obner basis elements are
\begin{eqnarray*}
\supp_{\BL}(P_1) & = & \{L_1^2,L_1L_2,L_2^2,L_1,L_2,1\}\\
\supp_{\BL}(P_2) & = & \{L_2^3,L_1L_2,L_2^2,L_1,L_2,1\}\\
\supp_{\BL}(P_3) & = & \{L_1L_2^2,L_1L_2,L_2^2,L_1,L_2,1\}.
\end{eqnarray*}
As stated in Proposition~\ref{thm.1}, the ideal generated by
these three elements has rank~$5$ and the monomials that cannot
be reduced by $P_1,P_2,P_3$ are the following:
\[
  U = \{L_1L_2,L_2^2,L_1,L_2,1\}.
\]
We tried to guess an operator with support~$U$ and total 
degree~$60$ with respect to $M_1$ and $M_2$ in its coefficients, 
but did not succeed. 
This indicates that we found the ideal of minimal rank, 
taking into account that $P_1,P_2,P_3$ have $(M_1,M_2)$-total 
degrees~$23$, $28$, and $27$, respectively.
The modular computations involved solving linear systems with 
several thousand unknowns, which is not a big deal with today's
hardware. The details of this section are given in the Mathematica notebook
\texttt{GuessIdeal.CJ.2.3.nb} of \cite{Ko4}.

\subsection{Gr\"obner bases in the $q$-Weyl algebra}
\lbl{sub.gb}

Once a Gr\"obner basis with respect to some specified term order is
found, the FGLM algorithm (see \cite{FGLM})
can be employed to obtain a Gr\"obner basis
of the same ideal with respect to any other term order. Using the FGLM
implementation for noncommutative operator algebras in the Mathematica
package \texttt{HolonomicFunctions.m} (see~\cite{Ko2,Ko3}), the second
basis given in Proposition~\ref{thm.1} was computed. Fixing a certain term
order destroys the symmetry with respect to~$n_1$ and~$n_2$ that is
inherent in the problem of the $\fsl_3$-colored Jones polynomials,
because either $L_1>L_2$ or vice versa. Therefore the generators
$P_1,P_2,P_3$ are not expected to be symmetric themselves ($P_1$ however is).  
The symmetry of the problem is revealed when the Gr\"obner fan is
computed.  The resulting universal Gr\"obner basis is symmetric with
respect to~$n_1$ and~$n_2$. Moreover, specific symmetric recurrences
in the ideal can be found by calling the command \texttt{FindRelation}
(\cite{Ko2,Ko3}). For example, by specifying the support 
$\{1,L_1L_2,L_1^2L_2^2,L_1^3L_2^3,L_1^4L_2^4\}$ we found an
operator with this support and coefficients that are symmetric
with respect to $M_1$ and $M_2$. This operator gives rise to a
recurrence for the diagonal sequence~$f_{3,n,n}(q)$.
The details of this section are given in the Mathematica notebook
\texttt{GroebnerBasis.CJ.2.3.nb} of~\cite{Ko4}.

\subsection{Changing the normalization of the colored Jones polynomial}
\lbl{sub.norm}

In quantum topology, one often uses other normalizations
of the colored Jones function of a knot. Theorem \ref{thm.GV} uses the
normalization where the 0-framed unknot has colored Jones polynomial $1$.
If we use the TQFT normalization, where the colored Jones of the unknot
has value $d_{n_1,n_2}$ and an arbitrary fixed framing 
$c \in \BZ$, the corresponding colored Jones function is given by
$$
F_{b,c,n_1,n_2}(q) = d_{n_1,n_2} \th_{n_1,n_2}^c f_{b,n_1,n_2}(q)
$$
where $d_{n_1,n_2}$ and $\th_{n_1,n_2}$ are given by Equations \eqref{eq.dim}
and \eqref{eq.theta} respectively. The result is still a bivariate 
sequence of Laurent polynomials in~$q$. Since $d_{n_1,n_2}$ and $\th_{n_1,n_2}$ 
are proper $q$-hypergeometric (in the language of \cite{WZ}), it follows 
that the generators for the annihilator ideal of $f_{b,n_1,n_2}(q)$ can easily 
be translated to generators of the annihilator ideal of $F_{b,n_1,n_2}(q)$ 
and vice-versa. This is explained in detail in the Mathematica notebook 
\texttt{GroebnerBasis.CJ.2.3.nb} of \cite{Ko4}.

\appendix

\section{Generators for the recursion ideal of $f_{3,n_1,n_2}(q)$}
\lbl{sec.app1}


{\small
\begin{eqnarray*}
P_1&=&-q^6M_1^3M_2(q^3M_1-1)(qM_2-1)(q^4M_1M_2-1)(q^5M_1M_2^2-1)F_1L_1^2\\
&&-q^4M_1M_2(q^2M_1-1)(q^2M_2-1)(M_1-M_2)(q^4M_1M_2-1)F_6L_1L_2\\
&&+q^6M_1M_2^3(qM_1-1)(q^3M_2-1)(q^4M_1M_2-1)(q^5M_1^2M_2-1)\tau(F_1)L_2^2\\
&&+M_2(q^2M_1-1)(qM_2-1)(q^3M_1M_2-1)(q^5M_1^2M_2-1)F_7L_1\\
&&+(-M_1)(qM_1-1)(q^2M_2-1)(q^3M_1M_2-1)(q^5M_1M_2^2-1)\tau(F_7)L_2\\
&&+q^3(qM_1-1)(qM_2-1)(M_1-M_2)(q^2M_1M_2-1)F_2\\[1ex]
P_2&=&-q^{21}M_1^3M_2^7(q^4M_2-1)(q^5M_1M_2-1)(q^5M_1M_2^2-1)F_1L_2^3\\
&&+q^7M_1^2M_2^3(q^2M_1-1)(q^2M_2-1)(q^4M_1M_2-1)(q^6M_1M_2^2-1)F_8L_1L_2\\
&&+q^6M_2^3(q^3M_2-1)(q^4M_1M_2-1)F_{16}L_2^2\\
&&+q^8M_1^2M_2^2(q^2M_1-1)(qM_2-1)(q^3M_1M_2-1)(q^6M_1M_2^2-1)F_3L_1\\
&&+(q^2M_2-1)(q^3M_1M_2-1)(q^5M_1M_2^2-1)F_{14}L_2\\
&&-q^2(qM_2-1)(q^2M_1M_2-1)F_9\\[1ex]
P_3&=&-q^{17}M_1^3M_2^5(q^2M_1-1)(q^3M_2-1)(q^5M_1M_2-1)(q^5M_1M_2^2-1)F_1L_1L_2^2\\
&&+q^5M_1M_2^2(q^2M_1-1)(q^2M_2-1)(q^4M_1M_2-1)(q^6M_1M_2^2-1)F_{12}L_1L_2\\
&&+q^6M_2^3(qM_1-1)(q^3M_2-1)(q^4M_1M_2-1)F_{13}L_2^2\\
&&+q^7M_1M_2^2(q^2M_1-1)(qM_2-1)(q^3M_1M_2-1)F_{11}L_1\\
&&-(qM_1-1)(q^2M_2-1)(q^3M_1M_2-1)(q^5M_1M_2^2-1)F_{15}L_2\\
&&+q^2(qM_1-1)(qM_2-1)(q^2M_1M_2-1)(q^6M_1M_2^2-1)F_4 
\end{eqnarray*}
\begin{eqnarray*}
Q_1&=&q^{47}M_1^5M_2^{10}(q^6M_2-1)(q^7M_1M_2-1)(q^6M_1M_2^2-1)(q^7M_1M_2^2-1)F_5L_2^5\\
&&-q^{24}M_1^2M_2^6(q^5M_2-1)(q^6M_1M_2-1)(q^6M_1M_2^2-1)F_{20}L_2^4\\
&&+q^9M_2^3(q^4M_2-1)(q^5M_1M_2-1)(q^9M_1M_2^2-1)F_{23}L_2^3\\
&&+(q^3M_2-1)(q^4M_1M_2-1)(q^7M_1M_2^2-1)F_{24}L_2^2\\
&&+q^2(q^2M_2-1)(q^3M_1M_2-1)(q^{10}M_1M_2^2-1)F_{18}L_2\\
&&-q^6(qM_2-1)(q^2M_1M_2-1)(q^9M_1M_2^2-1)(q^{10}M_1M_2^2-1)F_{10}\\[1ex]
Q_2&=&q^8M_1^3M_2^2(q-1)(q^2M_1-1)(q^2M_1^2+qM_1+1)(qM_2-1)(q^3M_1M_2-1)\\
&&\qquad\times(q^6M_1M_2^2-1)(q^7M_1M_2^2-1)(q^8M_1M_2^2-1)F_5L_1\\
&&-q^{37}M_1^5M_2^{10}(q^5M_2-1)(q^6M_1M_2-1)(q^6M_1M_2^2-1)F_8L_2^4\\
&&+q^{18}M_1^2M_2^6(q^4M_2-1)(q^5M_1M_2-1)F_{19}L_2^3\\
&&-q^6M_2^3(q^3M_2-1)(q^4M_1M_2-1)(q^7M_1M_2^2-1)F_{21}L_2^2\\
&&-(q^2M_2-1)(q^3M_1M_2-1)F_{22}L_2\\
&&+q^2(qM_2-1)(q^2M_1M_2-1)(q^8M_1M_2^2-1)F_{17}
\end{eqnarray*}
}
where $F_1,\dots,F_{24}$ are irreducible polynomials in $\BQ[q,M_1,M_2]$,
too large to print all of them here:

{\small

\begin{eqnarray*}
F_1&=&q^{18} M_1^6 M_2^6-q^{15} M_1^6 M_2^4-q^{15} M_1^4 M_2^6+q^{14} M_1^5 M_2^4-q^{14} M_1^4 M_2^5+q^{13} M_1^5 M_2^4-q^{12} M_1^5 M_2^4-q^{11} M_1^3 M_2^4\\
&&+q^{11} M_1^2 M_2^5-q^{10} M_1^5 M_2^2+q^{10} M_1^4 M_2^3+q^9 M_1^4 M_2^2-q^9 M_1^3 M_2^3+q^8 M_1^4 M_2^2+2 q^8 M_1^3 M_2^3+q^8 M_1^2 M_2^4\\
&&-q^7 M_1^4 M_2^2-q^7 M_1^3 M_2^3-q^6 M_1^2 M_2^2-q^4 M_1^2 M_2-q^4 M_2^3+q^3 M_1^3+q^3 M_1 M_2^2+q M_2-M_1\\
F_2&=&q^{19} M_1^6 M_2^6-q^{16} M_1^6 M_2^4-q^{16} M_1^4 M_2^6-q^{13} M_1^5 M_2^4-q^{13} M_1^4 M_2^5+q^{11} M_1^4 M_2^3+q^{11} M_1^3 M_2^4+q^{10} M_1^5 M_2^2\\
&&+q^{10} M_1^2 M_2^5+q^9 M_1^4 M_2^2+q^9 M_1^3 M_2^3+q^9 M_1^2 M_2^4-2 q^8 M_1^3 M_2^3+q^7 M_1^3 M_2^3-q^5 M_1^2 M_2^2-q^4 M_1^2 M_2\\
&&-q^4 M_1 M_2^2-q^3 M_1^3-q^3 M_1^2 M_2-q^3 M_1 M_2^2-q^3 M_2^3+q^2 M_1^2 M_2+q^2 M_1 M_2^2+M_1+M_2\\
& \vdots & \\
F_6&=&q^{22} M_1^7 M_2^7-q^{20} M_1^7 M_2^7-q^{18} M_1^7 M_2^6-q^{18} M_1^6 M_2^7+q^{15} M_1^7 M_2^4+q^{15} M_1^6 M_2^5+q^{15} M_1^5 M_2^6-2 q^{15} M_1^5 M_2^5\\
&& +q^{15} M_1^4 M_2^7+q^{14} M_1^6 M_2^4+q^{14} M_1^4 M_2^6+q^{13} M_1^5 M_2^5+q^{12} M_1^5 M_2^5-q^{12} M_1^4 M_2^4-q^{11} M_1^6 M_2^2-q^{11} M_1^2 M_2^6\\
&& -q^{10} M_1^4 M_2^4+q^9 M_1^4 M_2^3+q^9 M_1^3 M_2^4-q^8 M_1^5 M_2^2-3 q^8 M_1^4 M_2^3-3 q^8 M_1^3 M_2^4-q^8 M_1^2 M_2^5+q^7 M_1^4 M_2^3\\
&& +q^7 M_1^3 M_2^4+q^6 M_1^3 M_2^2+q^6 M_1^2 M_2^3+q^4 M_1^4+q^4 M_1^3 M_2+2 q^4 M_1^2 M_2^2+q^4 M_1 M_2^3+q^4 M_2^4-q^3 M_1^3 M_2\\
&& -q^3 M_1 M_2^3-q M_1^2-q M_1 M_2-q M_2^2+M_1 M_2\\
F_7&=&q^{24} M_1^8 M_2^7-q^{21} M_1^8 M_2^5-q^{21} M_1^6 M_2^7+q^{19} M_1^6 M_2^6-q^{18} M_1^7 M_2^5-q^{18} M_1^6 M_2^6+q^{16} M_1^5 M_2^5-q^{16} M_1^4 M_2^6\\
&& +q^{15} M_1^7 M_2^3+q^{15} M_1^4 M_2^6+q^{14} M_1^6 M_2^3+q^{14} M_1^5 M_2^4+2 q^{14} M_1^4 M_2^5-3 q^{13} M_1^5 M_2^4+q^{12} M_1^5 M_2^4\\
&& -q^{12} M_1^4 M_2^3+q^{11} M_1^3 M_2^4-q^{11} M_1^2 M_2^5+q^{10} M_1^5 M_2^2-q^{10} M_1^4 M_2^3-2 q^{10} M_1^4 M_2^2+q^9 M_1^3 M_2^3+q^9 M_1^2 M_2^4\\
&& -q^8 M_1^5 M_2+q^8 M_1^4 M_2^2-3 q^8 M_1^3 M_2^3-q^8 M_1^2 M_2^4+q^7 M_1^4 M_2^2+q^7 M_1^4+q^7 M_1^3 M_2^3-q^5 M_1^4+q^5 M_1^3 M_2\\
&& +q^4 M_1^2 M_2+q^4 M_2^3-q^3 M_1^3-q^3 M_1 M_2^2-q M_2+M_1\\
& \vdots & \\
\end{eqnarray*}
}
The complete list is available at \cite{Ko4}.

\section{The $q=1$ limit for the recursion ideal of $f_{3,n_1,n_2}(q)$}
\lbl{sec.app2}


Let $p_i=\e(P_i)$ denote the image of $P_i$ when $q=1$. Then, up to polynomial
factors of $M_1,M_2$, we have: 

{\small

\begin{eqnarray*}
p_1 &=&
M_1^2-L_2 M_1^2-L_1 M_1 M_2+L_2 M_1 M_2-L_1^2 M_1^4 M_2
+L_1 L_2 M_1^4 M_2-M_2^2+L_1 M_2^2-M_1^3 M_2^2+L_1^2 M_1^3 M_2^2
\\ & &
+L_2 M_1^3 M_2^2-L_1 L_2 M_1^3 M_2^2+M_1^2 M_2^3-L_1 M_1^2 M_2^3+L_1 L_2 M_1^2 M_2^3-L_2^2 M_1^2 M_2^3 +L_1 M_1^5 M_2^3-L_1 L_2 M_1^5 M_2^3
\\ & &
-L_1 L_2 M_1 M_2^4+L_2^2 M_1 M_2^4+L_1 M_1^4 M_2^4-L_1^2 M_1^4 M_2^4-L_2 M_1^4 M_2^4+L_2^2 M_1^4 M_2^4-L_2 M_1^3 M_2^5+L_1 L_2 M_1^3 M_2^5
\\ & &
-L_1 M_1^6 M_2^5+L_1^2 M_1^6 M_2^5+L_2 M_1^5 M_2^6-L_2^2 M_1^5 M_2^6
\\
p_2 &=&
(-1+L_2) (M_1^2-M_1 M_2+M_2^2-L_1 M_1^2 M_2^3 +L_2 M_1^2 M_2^3+L_1 M_1^5 M_2^3-L_2 M_1 M_2^4-L_2 M_1^4 M_2^4+L_2 M_2^5
\\ & &
+L_2 M_1^3 M_2^5-L_2 M_1^2 M_2^6-L_2^2 M_1^4 M_2^7+L_2^2 M_1^3 M_2^8-L_2^2 M_1^5 M_2^9)
\\
p_3 &=&
(-1+L_2) 
(-1+M_1^2 M_2-M_1 M_2^2+L_1 M_1 M_2^2
\\ & &
-L_2 M_2^3-L_1 M_1^3 M_2^3+L_2 M_1^2 M_2^4+L_1 M_1^5 M_2^4-L_2 M_1^4 M_2^5+L_1 L_2 M_1^4 M_2^5-L_1 L_2 M_1^3 M_2^6+L_1 L_2 M_1^5 M_2^7)
\\[1ex]
q_1&=&
(-1+L_2)^2 (1+L_2 M_2^3) (1+L_2 M_1^3 M_2^3) (-1+L_2 M_1^2 M_2^4)
\\
q_2 &=&
-(-1+L_2) (1+L_2 M_2^3) (1+L_2 M_1^3 M_2^3) (-1+L_2 M_1^2 M_2^4)
\end{eqnarray*}
}

\subsection*{Acknowledgments}
The paper came into maturity following requests for explicit formulas
for the recursion of the colored Jones polynomial of a knot, during visits of 
the first author in the Max-Planck-Institut f\"ur Mathematik in 2009-2010, 
and during an Oberwolfach workshop 1033/2010 in August 2010. 
The authors met during the conference in honor of Doron Zeilberger's
60th birthday, held at Rutgers University in May 2010, and wish to thank
the organizers for their hospitality. The authors especially wish to 
thank Don Zagier and Doron Zeilberger for their interest, their
encouragement and for stimulating conversations.

\bibliographystyle{hamsalpha}\bibliography{biblio}
\end{document}